\newcommand{\urlwofont}[1]{\urlstyle{same}\url{#1}}
\newcommand{\nc}{\newcommand}
\nc{\nt}{\newtheorem}
\nc{\dmo}{\DeclareMathOperator}
\theoremstyle{plain}
\newtheorem{theorem}{Theorem}[section]
\newtheorem{maintheorem}{Theorem}
\newtheorem{prop}[theorem]{Proposition}
\newtheorem{corollary}[theorem]{Corollary}
\newtheorem{maincorollary}[maintheorem]{Corollary}
\theoremstyle{definition}
\theoremstyle{remark}
\dmo{\SMod}{SMod}
\dmo{\PMod}{PMod}
\dmo{\SHomeo}{SHomeo}
\dmo{\SI}{\mathcal{SI}}
\dmo{\SSp}{SSp}
\dmo{\PSp}{PSp}
\DeclareMathOperator{\GL}{GL}
\newcommand\Z{\ensuremath{\mathbb{Z}}}
\newcommand\Q{\ensuremath{\mathbb{Q}}}
\nc{\p}[1]{\noindent {\bf #1.}}
\nc{\margin}[1]{\marginpar{\scriptsize #1}}
\nc{\PartialIBases}{\mathfrak{IB}}
\nc{\PartialIBasesEx}{\widehat{\mathfrak{IB}}}
\nc{\PartialBases}{\mathfrak{B}}
\nc{\Building}{\mathfrak{T}}
\nc{\height}{\ensuremath{\text{ht}}}
\nc{\Poset}{\mathfrak{P}}
\nc{\Field}{\mathbb{F}}
\nc{\Link}{\ensuremath{\text{Link}}}
\nc{\Star}{\ensuremath{\text{Star}}}
\nc{\SymTorelli}{\ensuremath{\mathcal{SI}}}
\nc{\BTorelli}{\ensuremath{\mathcal{BI}}}
\dmo{\Braid}{\ensuremath{B}}
\dmo{\PureBraid}{\ensuremath{PB}}
\nc{\Hyper}{\ensuremath{\iota}}
\nc{\BigFreeProd}{\mathop{\mbox{\Huge{$\ast$}}}}
\nc{\Quotient}{\ensuremath{\mathcal{Q}}}
\nc{\QuotientEx}{\ensuremath{\widehat{\mathcal{Q}}}}
\nc{\Presentation}[2]{\ensuremath{\text{$\langle #1$ $|$ $#2 \rangle$}}}
\nc{\SpGen}{\ensuremath{S_{\text{Sp}}}}
\nc{\SpRel}{\ensuremath{R_{\text{Sp}}}}
\nc{\QGen}{\ensuremath{S_{\mathcal{Q}}}}
\nc{\QRel}{\ensuremath{R_{\mathcal{Q}}}}
\nc{\PBs}{\ensuremath{T}}
\nc{\Qs}{\ensuremath{\overline{s}}}
\dmo{\PB}{PB}
\nc{\BIredg}{\mathcal{BI}_{2g+1}^{\text{red}}}
\nc{\BI}{\mathcal{BI}}
\dmo{\D}{D}
\dmo{\Stab}{Stab}
\dmo{\Surger}{Surger}
\nc{\I}{\mathcal{I}}
\nc{\spanmap}{span}
\nc{\genbygen}[2]{\premonoid{#1}{#2}}
\nc{\premonoid}[2]{#1 \circledcirc #2}
\nc{\monoid}[2]{#1 \odot #2}
\nc{\G}{\Gamma}
\nc{\raag}{A_\Gamma}
\nc{\racg}{W_\G}
\nc{\raagdelt}{A_\Delta}
\dmo{\Aut}{Aut}
\dmo{\Out}{Out}
\nc{\Autraag}{\Aut(\raag)}
\nc{\Outraag}{\Out(\raag)}
\nc{\diag}{D_\G}
\nc{\Autraagdelt}{\Aut(A_\Delta)}
\nc{\glk}{\GL(k,\mathbb{Z})}
\nc{\gln}{\GL(n,\mathbb{Z})}
\nc{\glnt}{\GL(n,\mathbb{Z} / 2)}
\nc{\glkdelt}{\GL(k |\Delta| ,\mathbb{Z})}
\nc{\gldelt}{\GL(|\Delta| ,\mathbb{Z})}
\nc{\zkdelt}{\mathbb{Z}^{k|\Delta|}}
\nc{\join}{\mathcal{J}}
\nc{\pc}{\mathrm{PC}}
\dmo{\lk}{lk}
\dmo{\st}{st}
\dmo{\Inn}{Inn}
\nc{\pia}{\Pi \mathrm{A}}
\nc{\piaG}{\pia_\G}
\nc{\ppia}{\mathrm{P \Pi A}}
\nc{\ppiaG}{\mathrm{P \Pi A}_\G}
\nc{\epia}{\mathrm{E \Pi A}}
\nc{\epiaG}{\mathrm{E \Pi A}_\G}
\nc{\ptor}{\mathcal{PI}}
\nc{\ptorG}{\mathcal{PI}_\G}
\nc{\CGi}{C_\G(\iota)}
\nc{\pbc}{\mathfrak{B}^\pi}
\dmo{\Cay}{Cay}
\dmo{\rev}{rev}
\nc{\Autfn}{\Aut(F_n)}
\dmo{\supp}{supp}
\dmo{\rk}{\mathrm{rk}}
\dmo{\PCT}{\mathrm{PCT}(\raag)}
\dmo{\PCTo}{\overline{\mathrm{PCT}}(\raag)}
\title{\vspace{-30pt} Infinite groups acting faithfully on the outer automorphism group of a right-angled Artin group }
\author{Corey Bregman and Neil J. Fullarton\vspace{-6pt}}
\begin{document}

\newcounter{enumi_saved}

\maketitle

\vspace{-23pt}
\begin{abstract}We construct the first known examples of infinite subgroups of the outer automorphism group of $\Outraag$, for certain right-angled Artin groups $\raag$. This is achieved by introducing a new class of graphs, called \emph{focused graphs}, whose properties allow us to exhibit (infinite) projective linear groups as subgroups of $\Out(\Outraag)$. This demonstrates a marked departure from the known behavior of $\Out(\Outraag)$ when $\raag$ is free or free abelian, as in these cases $\Out(\Outraag)$ has order at most 4. We also disprove a previous conjecture of the second author, producing new examples of finite order members of certain $\Out(\Autraag)$.\end{abstract}


\section{Introduction}

Right-angled Artin groups, or \emph{RAAGs}, comprise a class of groups which generalize free groups and free abelian groups. Every finite simplicial graph $\Gamma$ with vertex set $V$ defines a RAAG $A_\Gamma$ in the following way. The generating set of $A_\G$ is in bijection with the vertices of $\Gamma$ and the only relations are that two generators commute if their corresponding vertices share an edge in $\Gamma$.  Thus if $\Gamma$ has no edges then $A_\Gamma$ is just the free group $F_{V}$, whereas if $\Gamma$ is a complete graph, $A_\Gamma$ is the free abelian group $\Z\langle V\rangle$.  


In this paper, we will consider the automorphism and outer automorphism groups of general RAAGs in comparison with those of free groups and free abelian groups.  More specifically, we will investigate $\Out(\Out(A_\Gamma))$ and $\Out(\Aut(A_\Gamma))$. These groups provide a measure of the algebraic rigidity of $\Outraag$ and $\Autraag$, respectively, and their study fits into a more general program of investigating rigidity of groups throughout geometric group theory. 

The main goal of this paper is to show that there exist infinitely many graphs $\Gamma$ for which $\Out (\Outraag)$ is infinite. We achieve this by introducing a new class of graphs, which we call \emph{focused graphs}. A graph $\Gamma$ is said to be \emph{focused} if it has a distinguished vertex $c$ with the following two properties: (i) $c$ is the unique vertex of $\Gamma$ that may dominate a vertex other than itself, and (ii) $c$ is the only vertex whose star disconnects $\Gamma$. Focused graphs are the key construction that allow us to prove our following main theorem.

\begin{maintheorem} \label{inf} For each $n\geq 2$, there exist infinitely many focused graphs $\Gamma$ such that $\Out(\Out(A_\Gamma))$ contains $\mathrm{PGL}_{n}(\Z)$. \end{maintheorem}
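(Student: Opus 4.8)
The plan is to build, for each $n \geq 2$, an infinite family of focused graphs $\G$ whose central vertex $c$ dominates exactly $n$ other vertices $v_1, \dots, v_n$, and to show that the resulting $n$ transvections assemble into a characteristic copy of $\Z^n$ inside $\Outraag$ on which $\mathrm{PGL}_n(\Z)$ acts faithfully by outer automorphisms. Concretely, I would first fix a ``non-central'' part of the graph and attach $n$ pendant-like vertices to $c$, each carrying a distinct private neighbour, arranged so that (i) each $v_i$ is dominated by $c$ while the $v_i$ remain pairwise incomparable (no $v_i$ dominates another, and no graph symmetry permutes them), and (ii) only $\st(c)$ separates $\G$. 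Varying the fixed non-central part then produces infinitely many pairwise non-isomorphic such graphs, all sharing the same mechanism.

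Next I would compute $\Outraag$ using the Laurence--Servatius generating set (graph symmetries, inversions, transvections, partial conjugations). The focused hypotheses do the heavy lifting: condition (i) forces every transvection to have $c$ as its multiplier, so the only transvections are $\tau_i \colon v_i \mapsto v_i c$ (together with their left-handed variants, which coincide with the $\tau_i$ whenever $v_i$ is adjacent to $c$), and condition (ii) forces every partial conjugation to be by $c$. The upshot should be that $T := \langle \tau_1, \dots, \tau_n \rangle \cong \Z^n$ is a free abelian normal subgroup, with the remaining generators contributing only a controlled complement. I would then argue that $T$ is characteristic in $\Outraag$ (via an intrinsic description, e.g.\ as a canonical maximal normal abelian subgroup meeting the transvection subgroup), so that restriction yields a well-defined map $\Out(\Outraag) \to \GL_n(\Z) = \Aut(T)$.

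The core construction is the reverse map. For each $g \in \GL_n(\Z)$ I would define an automorphism $\Phi_g$ of $\Outraag$ acting on $T$ by $g$ (sending $\tau_i \mapsto \prod_j \tau_j^{g_{ji}}$) and fixing the complementary generators; the key point is that the defining relations are compatible with this change of basis precisely because all transvections share the single multiplier $c$, so that the inversion $\iota_c \colon c \mapsto c^{-1}$ conjugates each $\tau_i$ to $\tau_i^{-1}$, i.e.\ acts as the scalar $-I$, which commutes with every $g$. Checking that $g \mapsto \Phi_g$ is a homomorphism into $\Aut(\Outraag)$ reduces to this compatibility. Finally I would identify the inner automorphisms: since $T$ is abelian, conjugation by $T$ is trivial on $T$, and because the $v_i$ admit no permuting symmetry the only element of $\Outraag$ acting nontrivially on $T$ is $\iota_c$, acting as $-I$. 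Hence the inner image in $\GL_n(\Z)$ is exactly $\{\pm I\}$, and $g \mapsto \Phi_g$ descends to an injection $\mathrm{PGL}_n(\Z) = \GL_n(\Z)/\{\pm I\} \hookrightarrow \Out(\Outraag)$.

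I expect the main obstacle to be the precise computation of $\Outraag$ and, above all, verifying that the abstract $\GL_n(\Z)$-action extends from $T$ to a genuine automorphism of the whole group rather than merely a self-map of $T$: this requires a presentation (or a clean decomposition $T \rtimes Q$) in which $T$ is a normal factor whose relations with $Q$ involve only the scalar action of the inversions and the partial conjugations by $c$. Pinning down that the inner image is exactly $\{\pm I\}$---equivalently, that no non-scalar matrix is realised by conjugation---also hinges on ruling out symmetries or additional transvections that permute or mix the $v_i$, which is exactly what the asymmetry built into the private neighbours, together with the two focused conditions, is designed to guarantee.
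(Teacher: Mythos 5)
Your high-level framework is the same as the paper's (split $\Outraag$ as a free abelian group extended by the inversion group $I_\G$, extend suitable matrices to automorphisms that fix the inversions, then mod out by inner automorphisms), but you have attached the $\GL_n(\Z)$-action to the wrong free abelian subgroup, and the proof collapses at exactly the step you flag as ``the key point.'' The obstruction you overlooked is the inversions $\iota_{v_i}$ of the dominated vertices themselves: these exist for \emph{every} vertex, no matter how cleverly the graph is designed, and they do not act on $T = \langle \bar\tau_1, \dots, \bar\tau_n\rangle$ as scalars. Concretely, if $v_i$ is adjacent to $c$ then $\iota_{v_i}\bar\tau_i\iota_{v_i} = \bar\tau_i^{-1}$ while $\iota_{v_i}\bar\tau_j\iota_{v_i} = \bar\tau_j$ for $j \neq i$, so $\iota_{v_i}$ acts on $T$ as $\mathrm{Diag}(1,\dots,-1,\dots,1)$; if $v_i$ is not adjacent to $c$ then $\iota_{v_i}\bar\tau_i\iota_{v_i} = \bar\chi_{c,\{v_i\}}\bar\tau_i^{-1}$, so $T$ is not even invariant under conjugation. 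Hence your two central claims are false: $\iota_c$ is \emph{not} the only element acting nontrivially on $T$, and the image of $\Inn(\Outraag)$ in $\Aut(T)$ is the full diagonal subgroup $(\Z/2)^n$ (adjacent case), not $\{\pm I\}$. Worse, $\Phi_g$ is simply not a homomorphism for non-diagonal $g$: applying $\Phi_g$ with $g(\bar\tau_2) = \bar\tau_1\bar\tau_2$ to the valid relation $\iota_{v_1}\bar\tau_2\iota_{v_1} = \bar\tau_2$ forces $\bar\tau_1^{-1}\bar\tau_2 = \bar\tau_1\bar\tau_2$, a contradiction. The matrices compatible with all the relations are exactly those centralizing this diagonal action, i.e.\ the diagonal matrices themselves --- a finite group --- so no $\mathrm{PGL}_n(\Z)$ can be produced this way. (In the non-adjacent case one can enlarge $T$ by the classes $\bar\chi_{c,\{v_i\}}$, but the centralizer is then a level-2 congruence-type subgroup, again far from all of $\GL_n(\Z)$; this is the paper's $\Lambda_l[2]$ factor.)

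The paper's mechanism hangs the linear group not on transvections but on \emph{partial conjugations}. Writing $P_1, \dots, P_k$ for the components of $\G \setminus \st(c)$ and $\bar\chi_j$ for the class of $\chi_{c,P_j}$ in $\Outraag$, one has $\bar\chi_1 + \dots + \bar\chi_k = 0$ (conjugation by $c$ is inner), so these classes span a $\Z^{k-1}$; the crucial point is that every inversion other than $\iota_c$ fixes each $\bar\chi_j$ exactly, while $\iota_c$ inverts them all. So on this block the inversion action is literally $\{\pm I\}$, every matrix in $\GL_{k-1}(\Z)$ centralizes it and extends to an automorphism of $\Outraag \cong \Z^{k+m-1} \rtimes I_\G$ (acting as the identity on the transvections and on $I_\G$), and the inner automorphisms restrict to exactly $\pm I$ on this block, yielding $\mathrm{PGL}_{k-1}(\Z) \leq \Out(\Outraag)$. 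Taking focused graphs with $k = n+1$ components (of size at least $2$, so no transvections or singleton components interfere), trivial symmetry group, and varying the rest of the graph gives the infinitely many examples. In short: right framework, but the transvection subgroup cannot carry a projective linear action precisely because each dominated vertex contributes an inversion that sees its own transvection; the partial-conjugation subgroup can, because it is invisible to all inversions except $\iota_c$.
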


Previous work has calculated the groups $\Out(\Autraag)$ and $\Out(\Autraag)$ when $\raag$ is a free or free abelian group. A classical result of Hua and Reiner \cite{HR51} computes \linebreak $\Out(\Aut(\Z^n))=\Out(\Out(\Z^n))=\Out(\mathrm{GL}_{n}(\Z))$ to be \[\Out(\mathrm{GL}_{n}(\Z))=\left\{\begin{array}{cl} \Z/2\times \Z/2, & \mbox{even } n,\\
\Z/2, & \mbox{odd } n>1,\\
1, &n=1.

\end{array}\right.
\]
For the case of free groups, Dyer and Formanek give an algebraic proof in \cite{DF75} that \linebreak $\Out(\Aut(F_n))$ is trivial for all $n$. In \cite{K90}, Khramtsov gave another proof of this fact, and also showed that $\Out(\Out(F_n))$ is trivial for $n\geq 3$. Using outer space and auter space for free groups, Bridson and Vogtmann \cite{BV00} gave a geometric proof that for $n\geq 3$ both $\Out(\Out(F_n))$ and $\Out(\Aut(F_n))$ are trivial.  Note that the cases $n=1$ and $n=2$ for $\Out(\Out(F_n))$ are covered by the Hua--Reiner theorem, since $F_1 \cong \Z$, and a theorem of Nielsen states that $\Out(F_2)\cong \mathrm{GL}_2(\Z)$ (see \cite{LS77}).  

The above results indicate that both $\Out(\Autraag)$ and $\Out(\Outraag)$ are either small or trivial for $A_\G=\Z^n$ and $A_\G=F_n$, independent of $n$. For more general RAAGs, the second author has shown in \cite{F14} that this behavior does not hold.   More precisely, he proves that for any $n>0$ there exist graphs $\G_1, \Gamma_2$ so that $|\Out(\Aut(A_{\G_1}))|>n$, and $|\Out(\Out(A_{\G_2}))|>n$.  Theorem~\ref{inf} of this paper substantially strengthens the second author's result in the case of $\Out(\Out(A_\G))$. 


Our approach is to compute explicitly a large subgroup of $\Out(\Out(A_\G))$  for each focused graph $\Gamma$. In computing this, we first exhibit $\mathrm{GL}_{n}(\Z)$ subgroups inside $\Aut(\Outraag)$, hence proving that any $\Z$-linear group can be made to act faithfully on $\Outraag$ via automorphisms, for some RAAG $\raag$. This provides a stark constrast to the work of Hua--Reiner, Bridson--Vogtmann and Dyer--Formanek summarized above. 
 
The second author \cite{F14} also introduced the notion of an \emph{austere graph}.  If $\Gamma$ is austere, then $\Out(A_\Gamma)$ is, in some sense, as simple as possible.  The second author previously conjectured that for austere graphs $\G$, the group $\Autraag$ is complete (see the remarks after Proposition 5.1 in \cite{F14}). However the following theorem establishes that the order of $\Out(\Autraag)$ in the austere case is at least exponential in $n$.

\begin{maintheorem}\label{aust} If $\Gamma$ is austere and $n = |V| > 1$, then $|\Out(\Aut(A_\Gamma))|\geq2^n$.  
\end{maintheorem}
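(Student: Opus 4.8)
The plan is to exploit the semidirect product structure of $\Aut(\raag)$ for austere $\Gamma$. First I would recall the structural description coming from the Laurence--Servatius generators together with austerity: since no vertex dominates a distinct vertex there are no transvections, since no vertex has a separating star every partial conjugation is inner, and austere graphs carry no nontrivial symmetries, so that $\Aut(\raag)\cong\Inn(\raag)\rtimes(\Z/2)^n$, where the complementary $(\Z/2)^n=\langle\iota_1,\dots,\iota_n\rangle$ is generated by the inversions $\iota_i\colon v_i\mapsto v_i^{-1}$. Austerity also forbids a universal vertex (such a vertex would dominate every other), so $Z(\raag)=1$; hence $g\mapsto c_g$ identifies $\raag$ with $\Inn(\raag)$, every vertex has a non-neighbour, and the conjugation action of $\Aut(\raag)$ on $\Inn(\raag)$ is the tautological one, $\psi\cdot c_g=c_{\psi(g)}$.

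The conceptual heart of the argument is a detection lemma: any automorphism $\Phi$ of $\Aut(\raag)$ that preserves $\Inn(\raag)$ setwise is inner. Indeed, $\Phi|_{\Inn(\raag)}$ is an automorphism of $\Inn(\raag)\cong\raag$, hence equals the tautological action of some $\phi_0\in\Aut(\raag)$, so $\Phi(c_g)=c_{\phi_0(g)}$; comparing $\Phi(\psi c_g\psi^{-1})$ computed in the two obvious ways and using $Z(\raag)=1$ forces $\Phi(\psi)=\phi_0\psi\phi_0^{-1}$ for every $\psi$, i.e.\ $\Phi$ is conjugation by $\phi_0$. Consequently, to exhibit outer automorphisms it suffices to produce automorphisms of $\Aut(\raag)$ that move the subgroup $\Inn(\raag)$.

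Next I would write such automorphisms down. For each vertex $v_j$ define $\Psi_j\colon\Aut(\raag)\to\Aut(\raag)$ to fix every inversion $\iota_k$ and to send $c_{v_i}\mapsto c_{v_i}\iota_j$ whenever $v_i\notin\st(v_j)$ and $c_{v_i}\mapsto c_{v_i}$ otherwise. A direct check on the defining relations of $\Aut(\raag)\cong\raag\rtimes(\Z/2)^n$ shows each $\Psi_j$ is a well-defined involutive automorphism: since $\iota_j$ is attached only to generators $c_{v_i}$ with $v_i\notin\st(v_j)$, in particular with $i\neq j$, it commutes with those $c_{v_i}$, and both the RAAG commutations $[c_{v_i},c_{v_k}]=1$ across edges and the braiding $\iota_k c_{v_i}\iota_k=c_{v_i}^{\pm1}$ are preserved. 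The same bookkeeping shows the $\Psi_j$ pairwise commute and that $J\mapsto\Psi_J:=\prod_{j\in J}\Psi_j$ is an injective homomorphism $(\Z/2)^n\to\Aut(\Aut(\raag))$, since $\Psi_J(c_{v_i})=c_{v_i}\prod_{j\in J,\,v_i\notin\st(v_j)}\iota_j$ distinguishes distinct subsets $J$.

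Finally I would pass to $\Out(\Aut(\raag))$. For $J\neq\varnothing$ pick $j_0\in J$ and a non-neighbour $v_{i_0}$ of $v_{j_0}$ (which exists as $v_{j_0}$ is not universal); then the $(\Z/2)^n$-coordinate of $\Psi_J(c_{v_{i_0}})$ contains $\iota_{j_0}$ and so is nontrivial, whence $\Psi_J$ carries $c_{v_{i_0}}\in\Inn(\raag)$ outside $\Inn(\raag)$. Thus $\Psi_J$ moves $\Inn(\raag)$ and, by the detection lemma, is not inner. Therefore the composite $(\Z/2)^n\hookrightarrow\Aut(\Aut(\raag))\to\Out(\Aut(\raag))$ is injective, giving $|\Out(\Aut(\raag))|\geq 2^n$. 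The main obstacle is the detection lemma rather than the construction: the whole argument hinges on the observation that $\Inn(\raag)$ is \emph{not} characteristic in $\Aut(\raag)$ and that moving it is simultaneously necessary and sufficient for outerness. The verification that each $\Psi_j$ respects the relations is routine, and the only structural inputs needed beyond austerity are the identification $\Aut(\raag)\cong\raag\rtimes(\Z/2)^n$ and the triviality of $Z(\raag)$.
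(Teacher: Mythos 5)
Your proof is correct and takes essentially the same route as the paper: the same splitting $\Aut(\raag)\cong\raag\rtimes(\Z/2)^n$ for austere $\G$, the same automorphisms (your $\Psi_j$ is precisely the paper's $\Phi$ built from the function $\phi$ with $\phi(i)=\iota_j$ exactly when $v_i\notin\st(v_j)$ and $\phi(i)=0$ otherwise, and your $\Psi_J$ are the paper's $\Phi$ for the $2^n$ functions used in its count), and the same certificate of outerness, namely failure to preserve $\Inn(\raag)$ setwise. One small logical remark: at the end you cite your detection lemma to conclude that $\Psi_J$ is not inner, but that lemma points the wrong way for this purpose — what the step needs is only the trivial converse, that inner automorphisms of $\Aut(\raag)$ preserve the normal subgroup $\Inn(\raag)$ setwise (which is all the paper uses as well), so your harder direction (preserving $\Inn(\raag)$ implies inner, via $Z(\raag)=1$), while correct and a nice observation, is never actually required; likewise your injectivity bookkeeping via the homomorphism $(\Z/2)^n\to\Aut(\Aut(\raag))$ is a clean substitute for the paper's argument of composing with the projection onto $I_\G$.
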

In particular, we are able to achieve the two main results of \cite{F14} simultaneously:
\begin{maincorollary} For each $n\geq1$, there exist infinitely many graphs $\Gamma$ such that \linebreak $|\Out(\Aut(A_\Gamma))|>n$ and $|\Out(\Out(A_\Gamma))|>n$.
\end{maincorollary}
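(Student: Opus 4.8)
The plan is to combine the two Main Theorems. Theorem~B directly supplies the lower bound $|\Out(\Aut(A_\Gamma))| \geq 2^n$ for austere graphs, while Theorem~A supplies the infinitude of $\Out(\Out(A_\Gamma))$. The issue is that these two theorems concern different families of graphs: Theorem~A produces \emph{focused} graphs carrying $\mathrm{PGL}_n(\Z)$ inside $\Out(\Out(A_\Gamma))$, whereas Theorem~B concerns \emph{austere} graphs. So the real content of the corollary is to exhibit a single family of graphs that is simultaneously focused (or at least yields large $\Out(\Out(A_\Gamma))$) and austere (or at least yields large $\Out(\Aut(A_\Gamma))$).

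First I would recall the definitions and check compatibility: a focused graph has a distinguished vertex $c$ that is the unique dominating vertex and the unique vertex whose star disconnects $\Gamma$, while austerity is the condition forcing $\Out(A_\Gamma)$ to be as simple as possible. I would examine the explicit infinite family of focused graphs constructed in the proof of Theorem~A and verify that (after possibly a mild modification that does not disturb the focused structure, e.g.\ adjusting the subgraph away from $c$) each member is also austere. Concretely, I would fix $n \geq 1$ and take the family $\{\Gamma_k\}$ of focused graphs from Theorem~A realizing $\mathrm{PGL}_{n+1}(\Z) \hookrightarrow \Out(\Out(A_{\Gamma_k}))$; since $\mathrm{PGL}_{n+1}(\Z)$ is infinite, each $\Gamma_k$ already satisfies $|\Out(\Out(A_{\Gamma_k}))| > n$, giving infinitely many graphs with the second inequality for free.

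The heart of the argument is then to arrange the first inequality on the \emph{same} graphs. If the focused family is already austere, Theorem~B gives $|\Out(\Aut(A_{\Gamma_k}))| \geq 2^{|V(\Gamma_k)|} > n$ once the vertex count is large, which it is for all but finitely many $k$, so discarding finitely many members preserves infinitude. If the focused family of Theorem~A is not austere as stated, I would instead show that focused graphs on their own already force $\Out(\Aut(A_\Gamma))$ to be large—by the same mechanism (symmetric automorphisms, or the inversions/transvections coming from the distinguished vertex $c$) used to produce the $2^n$ lower bound in the austere case—thereby obtaining $|\Out(\Aut(A_{\Gamma_k}))| > n$ directly. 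Either way, infinitely many $\Gamma_k$ satisfy both inequalities.

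The main obstacle is the potential incompatibility of the focused and austere hypotheses: focused graphs have a distinguished dominating vertex $c$, and I must confirm that the presence of $c$ (and the disconnecting star) does not prevent the graph from being austere, and more importantly does not kill the exponential-order contribution to $\Out(\Aut(A_\Gamma))$ that Theorem~B exploits. I expect this to reduce to a careful bookkeeping of which symmetric/inversion automorphisms survive modulo inner automorphisms and the partial conjugations forced by $c$; once the two constructions are shown to coexist on a common infinite family (with vertex count tending to infinity), the corollary follows immediately by choosing $n$ suitably and discarding finitely many small graphs.
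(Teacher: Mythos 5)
There is a genuine gap, and it sits exactly where you flagged ``the main obstacle'': for the graphs that Theorem~\ref{inf} actually uses, the focused and austere conditions are mutually exclusive, so your first branch is empty and everything rests on an unproven claim. The $\mathrm{PGL}_n(\Z)$ inside $\Out(\Outraag)$ is produced by the factor $\mathrm{PGL}_{k-1}(\Z)$ of Proposition~\ref{cbar}, so Theorem~\ref{inf} requires $k-1\geq 2$, i.e.\ $\G\setminus\st(c)$ must have $k\geq 3$ connected components; austerity demands that $\G\setminus\st(v)$ be connected for \emph{every} vertex $v$. Hence no focused graph realizing Theorem~\ref{inf} is austere, and no modification ``away from $c$'' can repair this, since the disconnecting star at $c$ is the engine of the whole construction. (Even more bluntly: an austere graph has $\Out(\raag)\cong I_\G\cong(\Z/2)^N$, so $\Out(\Outraag)$ is finite and can never contain $\mathrm{PGL}_n(\Z)$ for $n\geq 2$.) Your fallback---that focused graphs on their own force $|\Out(\Autraag)|$ to be large ``by the same mechanism''---is not proved anywhere, in this paper or in your proposal. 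The mechanism of Theorem~\ref{aust} uses that for austere graphs the kernel of $\Autraag\to I_\G$ is exactly $\Inn(\raag)\cong\raag$, presented by relations (7)--(11), so the twists $\Phi(\gamma_k)=\gamma_k\cdot\phi(k)$ can be checked against those relations; for a focused graph the kernel is the strictly larger group $\PCT$, containing the transvections $\tau_i$ and partial conjugations $\chi_j$, on which the inversions act by the quite different relations (1)--(6), and no analogue of $\Phi$ is constructed or verified there. So the proposal defers its entire content to a nontrivial missing theorem.

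The paper's (implicit) argument runs in exactly the opposite direction: rather than dragging the $\Out(\Aut)$ bound onto the focused family, it drags the $\Out(\Out)$ bound onto the austere family, where it is nearly free. For austere $\G$ with $N=|V|>1$, Theorem~\ref{aust} gives $|\Out(\Autraag)|\geq 2^N$, and Section~\ref{austere} shows $\Out(\raag)\cong I_\G\cong(\Z/2)^N$, whence $\Out(\Outraag)\cong\Aut((\Z/2)^N)\cong\GL_N(\Z/2)$, whose order $\prod_{i=0}^{N-1}(2^N-2^i)$ also exceeds $n$ once $N$ is large. The corollary asks only that $|\Out(\Outraag)|>n$, not that it be infinite, so austere graphs with $N$ large enough satisfy both inequalities simultaneously, the infinitude of austere graphs being supplied by \cite{F14}. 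If you want to keep your route, you must first prove the missing statement about $\Out(\Autraag)$ for focused graphs; as written, the corollary does not follow from your argument.
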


\textbf{A caveat.} One might na\"{i}vely expect that in order to construct automorphisms of $\Outraag$, say, it would suffice to find a finite index subgroup $K\leq \Outraag$ that has a rich collection of automorphisms as an abstract group. It could then be hoped that these extend to give many automorphisms of $\Outraag$, since it is often the case that group-theoretic properties pass easily between a group and its finite index subgroups. Indeed, this is our approach, however the interplay between the finitely many cosets of $K$ in $\Outraag$ frequently prohibits any obvious attempts at extending such automorphisms to all of $\Outraag$. 

Considering the abstract commensurator $\text{Comm}(\Outraag)$ instead of $\Out(\Outraag)$ circumvents some of these difficulties, since $\text{Comm}(\Outraag)$ is precisely concerned with isomorphisms between finite index subgroups of $\Outraag$. For details see the remarks after Corollary \ref{faith}.


\textbf{Outline of the paper.} In Section 2, we recall some necessary background regarding automorphisms of right-angled Artin groups. In Section 3, we prove Theorem~\ref{inf}, while in Section 4, we prove Theorem~\ref{aust}.

\textbf{Acknowledgements.} The authors are grateful to Dan Margalit and Benson Farb for their helpful remarks on a draft of this paper.  The authors would also like to thank Tara Brendle and Andrew Putman for many useful comments and discussions, and for their persistent encouragement.  

\section{Preliminaries}
In this section we review basic properties of RAAGs and their automorphism groups.  Let $\G = (V,E)$ be a simplicial graph.  As stated in the introduction, $\G$ defines a group $A_{\G}$ with generating set $V=\{v_1,\ldots,v_n\}$ and relations $v_i v_j=v_j v_i$ if and only if $v_i$ is adjacent to $v_j$ in $\G$. For $v\in V$, denote by $\lk(v)$ the \emph{link} of $v$, by which we mean the set of vertices adjacent to $v$. The \emph{star} of $v$, by which we mean the set $\lk(v) \cup \{ v \}$, will be denoted $\st(v)$.  If $u,v\in V$ and $\lk(v)\subseteq \st(u)$ then we say $u$ \emph{dominates} $v$ and write $v\leq u$.  


Elements of $A_\G$ enjoy nice normal forms in terms of the generators $V$. Two words $w_1$ and $w_2$ in the generators $V$ (and their inverses) are said to be \emph{shuffle-equivalent} if $w_2$ can be obtained from $w_1$ by repeatedly exchanging pairs of adjacent commuting generators.  Hermiller and Meier show in \cite{HM99} that if $w_1$ and $w_2$ are minimal length words, then $w_1=w_2$ in $A_\G$ iff $w_1$ is shuffle-equivalent to $w_2$, and moreover that any word can be transformed into a minimal length word by swapping adjacent commuting generators and cancelling pairs of inverses whenever possible. This allows us to define the \emph{support} of $w \in \raag$, denoted $\supp(w)$, to consist of all $v \in V$ such that $v$ (or $v^{-1}$) appears in a minimal length word representing $w$. For a survey of RAAGs and their properties, see \cite{Ch07}.  
  
The automorphism group $\Autraag$ of a RAAG $\raag$ is generated by the following four types of automorphisms, known as the Laurence--Servatius generators:
\begin{enumerate}
\item \emph{Inversions}: Given $v\in V$, the automorphism $\iota_v$ sends $v\mapsto v^{-1}$ and fixes all other generators.  
\item \emph{Graph automorphisms}: Any graph automorphism of $\G$ induces a permutation of $V$ which extends to an automorphism of $A_\G$.
\item \emph{Transvections}: If $v\leq u$, the automorphism $\tau_{uv}$ sends $v\mapsto uv$ and fixes all other generators. 
\item \emph{Partial conjugations}: If $P$ is a connected component of $\G\setminus \st(v)$ for some $v\in V$, the automorphism $\chi_{v,P}$ maps $u\mapsto vuv^{-1}$ for every $u\in P$, and acts as the identity elsewhere.  
\end{enumerate}
The fact that these four types of automorphisms generate $\Aut(A_\G)$ was conjectured by Servatius in \cite{Ser89}, and later proven by Laurence \cite{Lau95}.  If $v\leq u$ and $v$ is adjacent to $u$, then $\tau_{uv}$ is an \emph{adjacent} transvection.  Otherwise $\tau_{uv}$ is a \emph{non-adjacent} transvection. In the sequel, the subgroup generated by the inversions will be denoted $I_\G$, while the subgroup generated by partial conjugations and tranvections will be denoted $\PCT$. The images of these four types of generators under the quotient map $\Aut(A_\G)\rightarrow \Out(A_\G)$ generate $\Out(A_\G)$.  We will use an overline to indicate when we refer to elements or subgroups of $\Out(A_\G)$: for example, $\bar \tau_{uv}$, $\bar\chi_{v,P}$, and $\PCTo$. 

%

\section{Proof of Theorem A}

Let $\G = (V,E)$ be a graph with a distinguished vertex $c$, such that if $v \leq u$ for distinct $v,u \in V$, then $u = c$, and for any $v \in V \setminus \{c\}$, the graph $\Gamma \setminus \st(v)$ is connected. We will call such a graph \emph{focused} (\emph{at $c$}). Let $L~=~\{ x_1, \ldots, x_l \} \subset V \setminus \{c\}$ denote the set of vertices that are dominated by, but not adjacent to, the vertex $c$, and let $S~=~\{ x_{l+1}, \ldots, x_m\} \subset V \setminus \{c\}$ denote the set of vertices that are both dominated by and adjacent to $c$. Finally, let $Q~=~\{ P_1, \ldots , P_{k} \}$ denote the connected components of the graph $\Gamma \setminus \st(c)$, where $k \geq l$, and we set $P_i = \{ x_i \}$ for $1 \leq i \leq l$. See Figure \ref{f1} below for a typical example of a focused graph. 

\begin{figure}[h]
\centering
\includegraphics[width=2.5in]{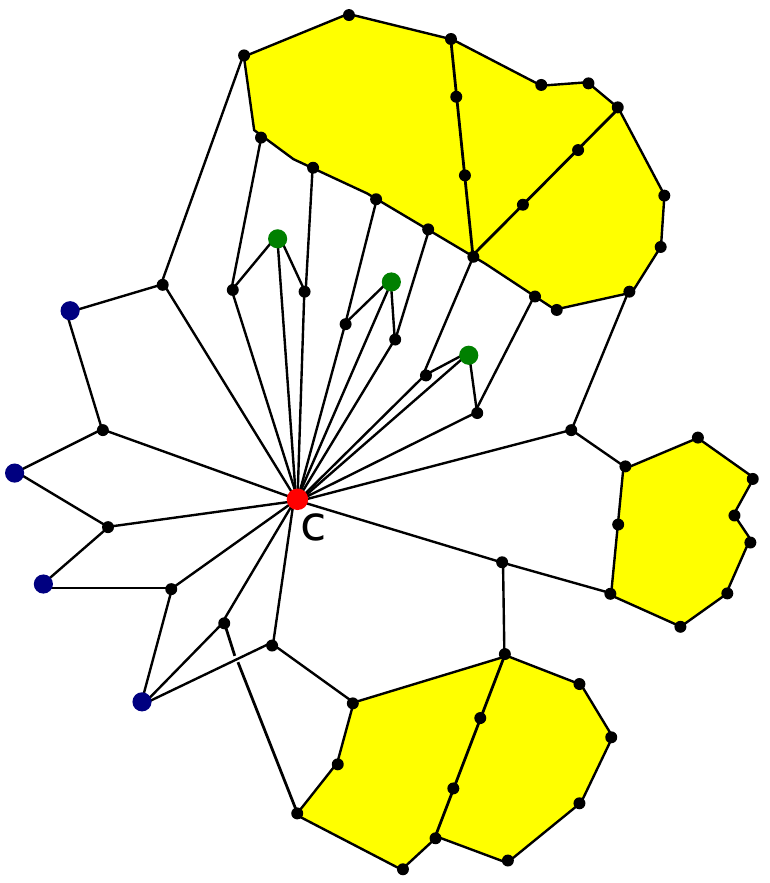}
\caption{An example of a focused graph with $l=4$, $m=7$, and $k=7$.  The distinguished vertex $c$ is shown in red.  Vertices dominated by but not adjacent to $c$ are shown in blue, while those which are dominated by and adjacent to $c$ are green. Connected components of $\G\setminus \st(c)$ which are not vertices are yellow.}
\label{f1}
\end{figure}

Note that a focused graph $\G$ may have non-trivial graph automorphism group, $\Aut (\G)$. From now on, we assume that $\Aut(\G)$ is trivial, as this simplifies the following exposition. This is not too restrictive a condition; our construction still yields infinite subgroups of $\Out(\Outraag)$ if $\Aut(\G) \neq 1$, however the obvious action of $\Aut(\G)$ on $\Outraag$ would force us to pass to proper subgroups of those we find below.

Examining the Laurence--Servatius generators of $\Autraag$ for a focused graph $\G$, we find that \[ \Autraag \cong \PCT \rtimes I_\G, \] with the splitting following from the observation that $I_\G$ injects into $\mathrm{GL}_{n}(\Z)$ under the canonical map $\Phi: \Autraag \to \mathrm{GL}_{n}(\Z)$, and $\Phi(\PCT) \cap \Phi(I_\G) = 1$. This splitting of $\Autraag$ descends to one of $\Outraag$, since $\Inn(\raag) \leq \PCT$. The image $\PCTo$ of $\PCT$ in $\Outraag$ is easy to describe, which we do now explicitly.

\begin{prop}\label{pct}Let $\Gamma$ be focused at $c$. Then \[ \Outraag \cong \Z^{k+m-1} \rtimes I_\G .\] In particular, $\PCTo \cong \Z^{k+m-1}$.
\end{prop}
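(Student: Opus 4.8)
The plan is to take the semidirect product decomposition $\Outraag \cong \PCTo \rtimes I_\G$ as given (it is recorded just before the statement), so that everything reduces to identifying $\PCTo$ with $\Z^{k+m-1}$. First I would pin down a generating set for $\PCTo$. Since $\G$ is focused at $c$, the only dominations are $x_i \leq c$ for $x_i \in L \cup S = \{x_1,\dots,x_m\}$, so the only transvections are the $\tau_{c,x_i}$ $(1 \le i \le m)$. For partial conjugations, the definition of focused forces $\G \setminus \st(v)$ to be connected for every $v \neq c$, so the only partial conjugation available at such a $v$ is conjugation by $v$, which is inner and hence trivial in $\Outraag$. Thus $\PCTo$ is generated by the images $\bar\tau_{c,x_i}$ $(1\le i \le m)$ and $\bar\chi_{c,P_j}$ $(1 \le j \le k)$. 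Moreover conjugation by $c$ equals $\chi_{c,P_1}\cdots\chi_{c,P_k}$, which yields the relation $\bar\chi_{c,P_1}\cdots\bar\chi_{c,P_k} = 1$ in $\Outraag$.

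Next I would check that all of these generators commute already in $\Autraag$: the $\tau_{c,x_i}$ pairwise commute (they modify distinct generators by left multiplication by the fixed letter $c$), the $\chi_{c,P_j}$ pairwise commute (they conjugate disjoint vertex sets by $c$), and a direct computation shows $\tau_{c,x_i}$ commutes with $\chi_{c,P_j}$ in every case, including the delicate one $x_i \in P_j$ (which forces $P_j = \{x_i\}$ with $x_i \in L$), where both $\tau_{c,x_i}\chi_{c,P_j}$ and $\chi_{c,P_j}\tau_{c,x_i}$ send $x_i \mapsto c^2 x_i c^{-1}$. Hence $\PCTo = A \cdot B$ is abelian, where $A = \langle \bar\tau_{c,x_i}\rangle$ and $B = \langle \bar\chi_{c,P_j}\rangle$. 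To separate these factors and pin down $A$, I would use the canonical map $\Phi \colon \Autraag \to \gln$, which kills inner automorphisms and so descends to $\overline\Phi \colon \Outraag \to \gln$. Partial conjugations act trivially on the abelianization, so $\overline\Phi(B) = 1$, whereas $\overline\Phi(\bar\tau_{c,x_i}) = I + E_{c,x_i}$; these elementary matrices multiply additively and generate a copy of $\Z^m$. This shows $A \cong \Z^m$ and, since $\overline\Phi|_A$ is injective while $\overline\Phi(B)=1$, that $A \cap B = 1$; thus $\PCTo \cong A \times B \cong \Z^m \times B$.

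The main obstacle is then the remaining claim that $B \cong \Z^{k-1}$, i.e. that the $\bar\chi_{c,P_j}$ satisfy no relation beyond $\prod_j \bar\chi_{c,P_j}=1$. Since partial conjugations are invisible to $\overline\Phi$, I would build a finer invariant measuring, component by component, ``how much $c$-conjugation'' an automorphism performs, modulo the global conjugation absorbed into $\Outraag$. Let $\epsilon \colon \raag \to \Z$ be the homomorphism recording the exponent sum of $c$, and let $\mathcal P \le \Autraag$ be the subgroup generated by the $\chi_{c,P_j}$ together with $\Inn(\raag)$. Every $\phi \in \mathcal P$ preserves $\epsilon$ and sends each vertex to a conjugate of itself; choosing a vertex $u_j \in P_j$ we may write $\phi(u_j) = g_j u_j g_j^{-1}$ and set $a_j(\phi) = \epsilon(g_j)$. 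This is well defined because the ambiguity in $g_j$ lies in the centralizer of $u_j$, which equals $\langle \st(u_j)\rangle$, and $c \notin \st(u_j)$ as $u_j \notin \st(c)$; hence $\epsilon$ vanishes on that centralizer. Using that elements of $\mathcal P$ preserve $\epsilon$, one checks $a_j$ is additive, so $\Psi = (a_1,\dots,a_k) \colon \mathcal P \to \Z^k$ is a homomorphism with $\Psi(\chi_{c,P_j}) = e_j$ and $\Psi(\Inn(\raag)) \subseteq \Z(1,\dots,1)$.

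Finally I would compose $\Psi$ with the quotient $\Z^k \to \Z^k / \Z(1,\dots,1) \cong \Z^{k-1}$ to obtain $\overline\Psi \colon \overline{\mathcal P} \to \Z^{k-1}$, which kills inner automorphisms and so is defined on the image of $\mathcal P$ in $\Outraag$. Since $\overline\Psi(\bar\chi_{c,P_j})$ is the class of $e_j$ and these classes generate $\Z^{k-1}$, the restriction $\overline\Psi|_B$ is onto. But $B$ is already a quotient of $\Z^k/\Z(1,\dots,1) \cong \Z^{k-1}$ by the relation found above, so a surjection $B \twoheadrightarrow \Z^{k-1}$ forces $B \cong \Z^{k-1}$ (a surjective endomorphism of $\Z^{k-1}$ is an isomorphism). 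Combining, $\PCTo \cong \Z^m \times \Z^{k-1} = \Z^{k+m-1}$, and feeding this into the given splitting yields $\Outraag \cong \Z^{k+m-1} \rtimes I_\G$, as required. The one genuinely delicate point, and the place I would be most careful, is the well-definedness and additivity of the invariant $\Psi$ — that is, the interaction of the centralizer computation with the requirement that $\mathcal P$ preserve the $c$-exponent.
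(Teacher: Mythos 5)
Your proposal is correct, and it shares the paper's overall skeleton: take the splitting $\Outraag \cong \PCTo \rtimes I_\G$ as given, observe that the only generators surviving in $\PCTo$ are the $\bar\tau_{c x_i}$ and $\bar\chi_{c,P_j}$, record the single relation $\bar\chi_{c,P_1}\cdots\bar\chi_{c,P_k}=1$ coming from $\gamma_c$, and use the abelianization to handle the transvections. But it diverges genuinely at the key step of ruling out further relations. The paper argues directly that a product $\tau_1^{r_1}\cdots\tau_m^{r_m}\chi_1^{s_1}\cdots\chi_{k-1}^{s_{k-1}}$ which is inner must be trivial: the action on the abelianization kills the $r_i$, and then, if the remaining product is conjugation by $p$, the fact that it fixes $\st(c)$ pointwise forces every vertex of $\supp(p)$ to dominate $c$ (via shuffle-equivalence of normal forms), so $\supp(p)\subseteq\{c\}$ by the focused hypothesis, and triviality on $P_k$ gives $p=1$. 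You instead split $\PCTo \cong A \times B$ and pin down $B$ by building the per-component $c$-exponent invariant $\Psi$, descending to $\overline\Psi \colon B \to \Z^{k-1}$, and closing with the Hopfian property of $\Z^{k-1}$ (indeed, as you set it up, the composite $\Z^k/\Z(1,\dots,1) \twoheadrightarrow B \to \Z^{k-1}$ is the identity, so $B \cong \Z^{k-1}$ immediately). The trade-offs are real: the paper's argument is shorter and needs only the Hermiller--Meier normal form, while yours leans on the Centralizer Theorem for RAAGs (the centralizer of a generator $u$ is $\langle \st(u)\rangle$), which you should cite (Servatius) rather than assert. In exchange, your key lemma is strictly more general: the invariant $\overline\Psi$ never uses the focused hypothesis (only that $u_j \notin \st(c)$), so it proves that for an \emph{arbitrary} graph the outer classes $\bar\chi_{c,P_j}$ of partial conjugations based at a fixed vertex $c$ generate a free abelian group of rank $k-1$; in your argument the focused condition enters only when identifying the generating set of $\PCTo$, whereas the paper invokes it inside the innerness argument itself.
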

\begin{proof}The group $\Outraag$ certainly splits as $\PCTo \rtimes I_\G$, by the preceding discussion. We must show that $\PCTo$ is free abelian of rank $k+m-1$. 

The group $\PCT \leq \Autraag$ is generated by $\Inn(\raag) \cong \raag$ and the $m +k$ mutually-commuting Laurence--Servatius generators of the form $\tau_{c{x_i}}$ or $\chi_{c,P_j}$, for $1 \leq i \leq m$ and $1 \leq j \leq k$. For conciseness, we shall write $\tau_i = \tau_{c{x_i}}$ and $\chi_j = \chi_{c,P_j}$.

The image $\PCTo$ is, therefore, an abelian group generated by the images $\bar \tau_i$ and $\bar \chi_j$ of $\tau_i$ and $\chi_j$ (respectively), and we use additive notation to reflect this. Since \[\gamma_c = \prod_{j=1}^{k} \chi_j\] in $\Autraag$, where $\gamma_c \in \Inn(\raag)$ denotes conjugation by $c$, we observe that \[\bar \chi_1 + \dots + \bar \chi_{k-1} = - \bar  \chi_{k}\] in $\Outraag$. We thus remove $\bar \chi_k$ from our generating set for $\Outraag$.


We now show that the product $\omega := {\tau_1}^{r_1} \dots {\tau_m}^{r_m} \chi_1^{s_1} \dots {\chi_{k-1}}^{s_{k-1}} \in \Autraag$ ($r_i$, $s_j \in \Z$) is inner if and only if it is trivial. First, observe that each of the $r_i$ must be zero. For if $[\omega]$ denotes the induced action of $\omega$ on the abelianization of $A_\G$, we have $[\omega]\colon [x_i]\mapsto [x_i]+r_i[c]$, where $[x_i]$ and $[c]$ denote the equivalence classes of $x_i$ and $c$ in the abelianization of $A_\G$.  

Hence we may assume $\omega:=\chi_1^{s_1} \dots {\chi_{k-1}}^{s_{k-1}}$ and suppose $\omega$ is equal to conjugation by $p \in \raag$. Since $\omega$ acts trivially on $\st(c)$, we must have $pvp^{-1} = v$ for each $v \in \st(c)$. This implies that every $u \in \supp(p)$ is adjacent to each such $v$, since $pvp^{-1}$ and $v$ must be shuffle-equivalent.  Each vertex in $\supp(p)$ hence dominates the vertex $c$, and so $\supp(p) = \{c\}$ or $\emptyset$, since $\Gamma$ is focused at $c$. 

However, $\omega$ also acts trivially on $P_k$, so by the same argument, we must have that $p$ is the identity, since $c$ is not adjacent to any vertex in $P_k$. Thus, if $\omega$ is non-trivial in $\Autraag$, its image is non-trivial in $\Outraag$, and so the set $\{\bar \tau_i, \bar \chi_j \mid 1 \leq i \leq m, 1 \leq j \leq k-1 \}$ is a free abelian basis for $\PCTo$.
\end{proof}

Since the image of $\PCT$ in $\Outraag$ is torsion-free, and so is $\Inn(\raag)$, we obtain the following corollary.
\begin{corollary} For a focused graph $\G$, the group $\PCT$ is torsion-free.
\end{corollary}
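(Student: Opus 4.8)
The plan is to realize $\PCT$ as a group extension whose kernel and quotient are both torsion-free, and then to invoke the elementary fact that such an extension is itself torsion-free. Concretely, since $\Inn(\raag) \leq \PCT$, restricting the quotient map $\Autraag \to \Outraag$ to $\PCT$ yields a surjection $\PCT \to \PCTo$ whose kernel is exactly $\PCT \cap \Inn(\raag) = \Inn(\raag)$. This produces a short exact sequence
\[ 1 \longrightarrow \Inn(\raag) \longrightarrow \PCT \longrightarrow \PCTo \longrightarrow 1. \]

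Next I would verify that both ends are torsion-free. The quotient $\PCTo$ is free abelian of rank $k+m-1$ by Proposition~\ref{pct}, hence torsion-free. For the kernel, note that $\Inn(\raag) \cong \raag / Z(\raag)$; since the central vertices of $\G$ are precisely those adjacent to every other vertex, $\raag$ splits as a direct product $A_{\G'} \times Z(\raag)$, where $A_{\G'}$ is the RAAG on the non-central vertices. Thus $\Inn(\raag) \cong A_{\G'}$ is again a RAAG and therefore torsion-free.

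Finally, I would argue that $\PCT$ inherits torsion-freeness from the sequence. Suppose $\phi \in \PCT$ has finite order. Its image in $\PCTo$ has order dividing that of $\phi$, so it is a torsion element of the torsion-free group $\PCTo$ and must be trivial; hence $\phi \in \Inn(\raag)$. But $\Inn(\raag)$ is torsion-free, forcing $\phi = 1$. The conceptual crux is this last observation—that an extension of a torsion-free group by a torsion-free group is torsion-free—but it is a one-line consequence of the fact that torsion maps to torsion. The only other point requiring care is the identification of the kernel with $\Inn(\raag)$, which relies on the inclusion $\Inn(\raag) \leq \PCT$. Neither presents a genuine difficulty, so the corollary follows quickly once Proposition~\ref{pct} is in hand.
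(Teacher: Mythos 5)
Your proof is correct and follows exactly the paper's argument: the paper likewise deduces torsion-freeness of $\PCT$ from the extension $1 \to \Inn(\raag) \to \PCT \to \PCTo \to 1$, noting that $\PCTo$ is torsion-free (free abelian, by Proposition~\ref{pct}) and that $\Inn(\raag)$ is torsion-free. Your additional verification that $\Inn(\raag) \cong \raag/Z(\raag)$ is itself a RAAG is a nice touch the paper leaves implicit, but the route is the same.
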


Our goal is now to understand the action of $I_\G$ on $\PCTo \cong \Z^{k+m-1}$ sufficiently to identify automorphisms of $\Z^{k+m-1}$ that may extend to well-defined automorphisms of $\Outraag$ by declaring that they act trivially on $I_\G$.

Due to its distinguished role, we denote by $\iota_c$ the automorphism of $\raag$ that inverts $c \in V$ and fixes every $v \in V \setminus \{c \}$. The action of $I_\G$ is fully encoded by the following six types of relation:
\begin{align} \iota_c \bar \chi_j \iota_c &=- \bar \chi_j & (1 \leq j \leq k-1 ), \\
\iota_c \bar\tau_i \iota_c &= -\bar\tau_i & (1 \leq i \leq m), \\
\iota_r \bar\chi_j \iota_r &= \bar\chi_j & (1 \leq j \leq k-1, \iota_r \neq \iota_c), \\
\iota_r \bar\tau_i \iota_r &= \bar\tau_i & (1 \leq i \leq m, \iota_r \neq \iota_c \mbox{ or } \iota_i),\\
\iota_i \bar\tau_i \iota_i &= \bar\chi_i -\bar\tau_i & (1 \leq i \leq l), \\
\iota_i \bar\tau_i \iota_i &= -\bar\tau_i & (l+1 \leq i \leq m).
\end{align}

Note that relations (5) and (6) distinguish $\iota_i \bar\tau_i \iota_i$ depending upon whether $\bar\tau_i$ is a non-adjacent or adjacent transvection, respectively. The action of $I_\Gamma$ on $\PCTo$ in the semi-direct product decomposition of $\Outraag$ is given by a homomorphism \[ \alpha: I_\G \to \Aut (\Z^{k+m-1}) \cong \mathrm{GL}_{k+m-1}(\Z). \] Let $\mathcal{C}$ denote the centralizer of $\alpha(I_\G)$ in $\mathrm{GL}_{k+m-1}(\Z)$. We may view $\mathcal{C}$ as a subgroup of $\Aut(\Outraag)$ by extending each $M \in \mathcal{C}$ to an automorphism $\tilde M \in \Aut(\Outraag)$ by declaring that $\tilde M$ restricts to the identity on $I_\G$ (see \cite[Section 3.1]{F14} for a more detailed discussion).

In order to give a tractable description of $\alpha(I_\G)$ and $\mathcal{C}$, we order the free basis for $\PCTo$ found in Proposition~\ref{pct} as follows:
\[( \bar \chi_1, \bar \tau_1, \dots, \bar \chi_l, \bar \tau_l, \bar \tau_{l+1}, \dots, \bar \tau_m, \bar \chi_{l+1}, \dots, \bar \chi_{k-1} ).\] As is usual, we denote the $q \times q$ identity matrix by $I_q$. Looking at relations (1)--(6) above, we see that the subgroup $\alpha(I_\G)$ consists of $-I_{k+m-1}$ together with block-diagonal matrices of the form $\mathrm{Diag}(D_1,D_2,D_3)$, where $D_3$ is $\pm I_{k-l-1}$ and $D_2$ is any diagonal matrix in $\mathrm{GL}_{m-l}(\Z)$. The matrix $D_1$ is any matrix in $\mathrm{GL}_{2l}(\Z)$ with block decomposition
\[ \begin{pmatrix} 
A_1 & 0 & \cdots & 0 \\ 0 & A_2 & \cdots &  0 \\
\vdots & \vdots & \ddots & \vdots \\
0 & 0 & \cdots & A_l 
\end{pmatrix}, \]

where each $A_i$ ($1 \leq i \leq l$) is either $I_2$ or $\begin{pmatrix} 1 & 1 \\ 0 & -1 \end{pmatrix}$. We denote the subgroup of $\mathrm{GL}_{2l}(\Z)$ consisting of such matrices by $\mathcal{L}$. 

With this description of $\alpha(I_\G)$ in place, we now identify the centralizer $\mathcal{C}$. We denote by $\Lambda_l[2]$ the \emph{principal level 2 congruence subgroup of $\mathrm{GL}_l(\Z)$} (that is, the kernel of the epimorphism $\mathrm{GL}_l(\Z) \to \mathrm{GL}_l(\Z / 2)$ that reduces matrix entries mod 2).

\begin{prop}\label{cent} The centralizer $\mathcal{C}$ of $\alpha(I_\G)$ in $\mathrm{GL}_{k+m-1}(\Z)$ is isomorphic to \[ \Lambda_l[2] \times (\Z / 2)^{m} \times \mathrm{GL}_{k-1}(\Z). \]
\end{prop}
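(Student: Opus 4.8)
The plan is to compute $\mathcal{C}$ directly as the commutant in $\mathrm{GL}_{k+m-1}(\Z)$ of the explicit generating set of $\alpha(I_\G)$, namely $-I_{k+m-1}$ together with the images $\alpha(\iota_i)$ of the inversions $\iota_i$ ($1\leq i\leq m$) and $\iota_c$. Since $-I_{k+m-1}=\alpha(\iota_c)$ is central it imposes no condition, so $\mathcal{C}$ is exactly the set of $M\in\mathrm{GL}_{k+m-1}(\Z)$ commuting with each $\alpha(\iota_i)$. I would organize the computation around the three blocks of the ordered basis and the three flavours of generator visible in the description of $\alpha(I_\G)$: the diagonal sign-changes from $D_2$ (indices $l+1\leq i\leq m$), the $2\times 2$ blocks $A_i$ from $D_1$ (indices $1\leq i\leq l$), and the trivial action on $D_3$.

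First I would dispose of the $D_2$ block. Commuting with every diagonal involution that negates a single coordinate $\bar\tau_i$ ($l+1\leq i\leq m$) and fixes the others is a routine calculation: it forces $M$ to have no off-diagonal entries linking those coordinates to anything else, and to act on them by a diagonal matrix with entries $\pm 1$. This simultaneously decouples the middle block from the rest and contributes the factor $(\Z/2)^{m-l}$; this is the easy part.

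The heart of the matter is commuting with the blocks $A_i=\begin{pmatrix}1&1\\0&-1\end{pmatrix}$. Each $A_i$ is an involution whose $+1$-eigenvector is $\bar\chi_i$ and whose $-1$-eigenvector is $\bar\chi_i-2\bar\tau_i$. The crucial point is integral rather than rational: although over $\Q$ the joint eigenspace decomposition splits the relevant space, the sublattice spanned by the $\bar\chi_i$ together with the $\bar\chi_i-2\bar\tau_i$ has index $2^l$ in the integral span of the $\bar\chi_i,\bar\tau_i$. Writing $M$ in this adapted basis and imposing that it preserve the full lattice $\Z^{k+m-1}$ (not merely the rational eigenspaces) is exactly what produces the mod-$2$ congruence conditions. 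Carrying this out, I expect to find that $M$ must fix each $-1$-eigenline $\Z(\bar\chi_i-2\bar\tau_i)$ up to sign, giving a further $(\Z/2)^l$ that combines with the previous factor into $(\Z/2)^m$, while the induced action on the span of the $\bar\chi_j$ ($1\leq j\leq k-1$) is forced to satisfy a level-$2$ congruence on the coordinates $\bar\chi_i$ with $i\leq l$ but is otherwise unconstrained; this is the source of the $\Lambda_l[2]$ and $\mathrm{GL}_{k-1}(\Z)$ factors.

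Finally I would assemble these constraints and verify that the resulting assignment $M\mapsto(\text{level-}2\text{ part},\ \text{signs},\ \text{free }\mathrm{GL}\text{ part})$ is a group isomorphism onto $\Lambda_l[2]\times(\Z/2)^m\times\mathrm{GL}_{k-1}(\Z)$. In particular one must check that the off-diagonal entries of $M$ coupling the $\bar\tau_i$ to the $\bar\chi_j$, which are forced to be half-integer corrections of the form $\tfrac12(\bar\chi_i-\lambda_i\bar\tau_i)$, are uniquely determined by the diagonal data (so the map is well defined and bijective) and that block multiplication respects the product (so it is a homomorphism). I expect the main obstacle to be precisely this integral bookkeeping: disentangling the forced half-integer corrections so that the congruence conditions come out to be exactly level $2$, and confirming that the three factors split as an honest direct product rather than as some nonsplit extension.
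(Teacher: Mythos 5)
Your middle-block reduction and your eigenline analysis are both correct, and they take a genuinely different route from the paper: the paper first argues that every centralizing matrix is block-diagonal with respect to the splitting into the $(\bar \chi_i,\bar \tau_i)_{i\le l}$, the $(\bar \tau_i)_{l<i\le m}$, and the $(\bar \chi_j)_{l<j\le k-1}$, and then computes the centralizer of $\mathcal{L}$ inside $\mathrm{GL}_{2l}(\Z)$ by explicit $2\times 2$-block computations and a determinant expansion along the even rows; you instead diagonalize each involution rationally and let integrality impose congruences. Carried out to the end, your method gives: $M$ acts by a sign on each primitive eigenline $\Z(\bar\chi_i-2\bar\tau_i)$ and on each decoupled $\bar\tau_i$ ($l<i\le m$), giving $(\Z/2)^m$, and restricts on the lattice $L_+=\Z\{\bar\chi_1,\dots,\bar\chi_{k-1}\}$ to a matrix $A\in\mathrm{GL}_{k-1}(\Z)$ subject to the single constraint $Ae_i\equiv e_i \pmod 2$ for $1\le i\le l$ (this is exactly the integrality of $M\bar\tau_i=\epsilon_i\bar\tau_i+\tfrac{1}{2}(A\bar\chi_i-\epsilon_i\bar\chi_i)$).

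The gap is your final assembly, and it is not repairable bookkeeping. The group cut out by your (correct) constraints is $\mathcal{C}\cong(\Z/2)^m\times G$ with $G=\{A\in\mathrm{GL}_{k-1}(\Z):Ae_i\equiv e_i\pmod 2,\ 1\le i\le l\}$, a single congruence-type subgroup of $\mathrm{GL}_{k-1}(\Z)$; it is not a product $\Lambda_l[2]\times\mathrm{GL}_{k-1}(\Z)$. The congruence constrains only the first $l$ columns of $A$ but permits arbitrary even entries in every row, so the coordinates $\bar\chi_i$ ($i\le l$) and $\bar\chi_j$ ($j>l$) do not decouple: the transformation $U\colon\bar\chi_j\mapsto\bar\chi_j+\bar\chi_i$ (with $i\le l<j\le k-1$, all other basis vectors fixed) commutes with all of relations (1)--(6), hence lies in $\mathcal{C}$, and its presence obstructs any splitting of the claimed shape; indeed, comparing centers shows $(\Z/2)^m\times G$ is not even abstractly isomorphic to $\Lambda_l[2]\times(\Z/2)^m\times\mathrm{GL}_{k-1}(\Z)$ once $l\ge 1$. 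You should also be aware that this same element $U$ conflicts with the paper's own proof: the step asserting that $M_{13}$ and $M_{31}$ vanish by an argument similar to the one for $M_{12}$ fails, because $D_3$ cannot be varied independently of $D_1$ inside $\alpha(I_\G)$, and the actual condition $D_1M_{13}=M_{13}$ kills only the $\bar\tau$-rows of $M_{13}$. So your approach, finished honestly, computes the centralizer to be $(\Z/2)^m\times G$ and shows that Proposition~\ref{cent} as stated (and the $\mathrm{GL}_{k-l-1}(\Z)$ variant implicit in the paper's proof) cannot hold when $1\le l\le k-2$; no proof of the stated isomorphism can succeed in that range, so the failure you anticipated at the direct-product verification is genuine and unavoidable.
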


\begin{proof} Let $M \in \mathrm{GL}_{k+m-1}(\Z)$, and suppose that $M$ centralizes $\alpha(I_\G)$. We specify a $3 \times 3$ block decomposition on $M$ by declaring that the $(1,1)$ block is $2l \times 2l$, the $(2,2)$ block is $(m-l) \times (m-l)$ and the $(3,3)$ block is $(k-1) \times (k-1)$. First, we show that $M$ is block-diagonal with respect to this block decomposition.

Let $M=(M_{ij})$ where $M_{ij}$ is the matrix in the $(i,j)$ block of $M$. Let \[ D = \mathrm{Diag}(D_1,D_2,D_3) \in \alpha(I_\G), \] as discussed prior to the statement of the proposition. Since $DM = MD$, it must be the case that $M_{32} D_2 = \pm M_{32}$, and $\pm M_{23} = D_2 M_{23}$ for any choice of diagonal matrix $D_2$. This forces $M_{23}$ and $M_{32}$ to be the zero matrix. We must also have $M_{21}D_1 = D_2M_{21}$ and $M_{12} D_2 = D_1 M_{12}$ for any choice of $D_2$ and $D_1$. Taking $D_1$ to be the identity matrix and choosing $D_2$ appropriately allows us to conclude that $M_{21}$ and $M_{12}$ are both the zero matrix. Finally, a similar argument forces $M_{13}$ and $M_{31}$ to also be the zero matrix. Thus, $M = \mathrm{Diag}(M_{11}, M_{22}, M_{33})$.

Since $M \in \mathcal{C}$ and $D$ are both block-diagonal, to determine $\mathcal{C}$ it is necessary and sufficient to centralize within the three diagonal blocks of $D$. For the second and third diagonal blocks, these centralizers are the diagonal subgroup of $\mathrm{GL}_{m-l}(\Z)$ and all of $\mathrm{GL}_{k-1}(\Z)$, respectively. The only task that remains is to determine the centralizer in $\mathrm{GL}_{2l}(\Z)$ of the subgroup $\mathcal{L}$ defined previously.

Suppose that $N \in \mathrm{GL}_{2l}(\Z)$ lies in $C(\mathcal{L})$, the centralizer of the subgroup $\mathcal{L}$. Endow $N$ with a block decomposition compatible with that used to define $\mathcal{L}$: let $N$ have an $l \times l$ block decomposition, where each block is of size $2 \times 2$, writing $N = (N_{ij})$, where $N_{ij}$ is the matrix in the $(i,j)$ block of $N$. Carrying out block matrix multiplication, we see that for $N$ to centralise $\mathcal{L}$ it is necessary that $N_{ii}$ ($1 \leq i \leq l$) commutes with each member of the order 2 subgroup $\mathcal{P}:= \left \langle \begin{pmatrix} 1 & 1 \\ 0 & -1 \end{pmatrix} \right \rangle$, and that $N_{ij} S = TN_{ij}$ ($1 \leq i \neq j \leq l$) for all $S, T \in \mathcal{P}$.

Let $K = \begin{pmatrix} a & b \\ c & d \end{pmatrix}$ appear in some $2 \times 2$ block of $N$. If $K$ lies in a diagonal block, then by the above discussion, we necessarily have
\begin{align*} a &= a + c, \\ a - b &= b+d, \\ c &= -c, \\ c - d &= -d. \end{align*} If $K$ lies off the diagonal of $N$, then its entries must further satisfy the relations \begin{align*} b &= b +d, \\ d &= -d. \end{align*}

In summary, we conclude that in the above block decomposition of $N$, the $i$th diagonal blocks are of the form $\begin{pmatrix} 2b_i + d_i & b_i \\ 0 & d_i \end{pmatrix}$ for some $b_i, d_i \in \Z$, and the off-diagonal $(i,j)$ block is of the form $\begin{pmatrix} 2e_{ij} & e_{ij} \\ 0 & 0 \end{pmatrix}$ for some $e_{ij} \in \Z$. Notice that the even-numbered rows each have precisely one non-zero entry, and so each $d_i$ must lie in $ \{ \pm1 \}$. We have necessary conditions on the entries of the centralizing matrix $N$; we now use these even-numbered rows to obtain sufficient conditions.

To calculate the determinant of $N$, we may consider the determinants of the minors obtained by expanding along these even-numbered rows. Since $\det N = \pm 1$, this expansion allows us to conclude that the matrix
\[ N' := \begin{pmatrix} 
2b_1 + d_1 & 2e_{12} & \cdots & 2e_{1l} \\ 2e_{21} & 2b_2 + d_2 & \cdots &  2e_{2l} \\
\vdots & \vdots & \ddots & \vdots \\
2e_{l1} & 2e_{l2} & \cdots & 2b_l + d_l 
\end{pmatrix} \] lies in $\mathrm{GL}_l(\Z)$. Indeed, $N' \in \Lambda_l[2]$. This observation allows us to define a function \linebreak $\theta : \Lambda_l[2] \to C(\mathcal{L})$ by declaring each $d_i$ in $\theta(A)$ to be 1. Moreoever, due to the placement of zeroes in $N \in C(\mathcal{L})$, $\theta$ is a homomorphism, and an injective one at that. The image of $\theta$ is clearly not surjective, as it cannot contain matrices whose $d_i$ entries are $-1$, however the image is finite index in $C(\mathcal{L})$, as we shall see.

Let $\{f_i \}_{i=1}^l$ be a basis for $(\Z/2)^l$. Define $\xi : (\Z / 2)^l \to C(\mathcal{L})$ with $\xi (f_i)$ being the diagonal matrix in $C(\mathcal{L})$ with $d_i = -1$. We claim that \[ \theta \times \xi : \Lambda_l[2] \times (\Z /2)^l \to C(\mathcal{L}) \] is an isomorphism. Verifying this is a straightforward exercise.

Assembling the centralizers of the three diagonal blocks of $M \in \mathcal{C} \leq \mathrm{GL}_{k+m-1}(\Z)$, we thus obtain that $\mathcal{C}$ is isomorphic to the direct sum in the statement of the proposition.

   \end{proof}
   
\textbf{Remark.} Note that in the above proof of Proposition~\ref{cent}, we realized the principal level 2 congruence subgroup $\Lambda_l[2]$ as a finite index subgroup of a centralizer in $\mathrm{GL}_{2l}(\Z)$ of a finite subgroup. We are not aware of this being exhibited elsewhere in the literature, and it may be of independent interest.

Finally, we determine the image of $\mathcal{C} \leq \Aut(\Outraag)$ in $\Out(\Outraag)$ when we take the quotient by $\Inn(\Outraag)$.

\begin{prop} \label{cbar} The image $\bar {\mathcal{C}}$ of $\mathcal{C}$ in $\Out(\Outraag)$ is isomorphic to \[ \left((\Lambda_l[2] \times (\Z / 2)^l ) / \mathcal{L}\right) \times \mathrm{PGL}_{k-1}(\Z) . \]
\end{prop}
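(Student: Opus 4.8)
The plan is to realise $\bar{\mathcal C}$ as a quotient of $\mathcal{C}$ by the inner automorphisms it contains, and then to divide out the subgroup computed in Proposition~\ref{cent} factor by factor. By definition $\bar{\mathcal C}$ is the image of $\mathcal{C}$ under the map $\Aut(\Outraag)\to\Out(\Outraag)$, so first I would record the isomorphism $\bar{\mathcal C}\cong\mathcal{C}/(\mathcal{C}\cap\Inn(\Outraag))$. The whole problem thus reduces to two tasks: identifying $\mathcal{C}\cap\Inn(\Outraag)$ precisely, and then computing the resulting quotient of the group in Proposition~\ref{cent}.

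The crux is the first task. Using the splitting $\Outraag\cong\PCTo\rtimes I_\G$ of Proposition~\ref{pct}, I would write a general element as a pair $(w,\sigma)$ with $w\in\PCTo$ and $\sigma\in I_\G$, and recall that each $M\in\mathcal{C}$ acts as $\tilde M\colon(w,\sigma)\mapsto(Mw,\sigma)$. Computing the conjugation directly, and using that $I_\G$ is elementary abelian so that $\rho\sigma\rho^{-1}=\sigma$, gives
\[ (u,\rho)(w,\sigma)(u,\rho)^{-1}=\bigl(\alpha(\rho)w+u-\alpha(\sigma)u,\ \sigma\bigr). \]
Equating this with $\tilde M(w,\sigma)$ for all $(w,\sigma)$: taking $w=0$ forces $u$ to be $\alpha(I_\G)$-fixed, while the coefficient of $w$ forces $M=\alpha(\rho)$. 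Hence $\tilde M$ is inner exactly when $M\in\alpha(I_\G)$ (conjugation by $(0,\rho)$ already realises $\tilde M$ for $M=\alpha(\rho)$), so $\mathcal{C}\cap\Inn(\Outraag)$ is the copy of $\alpha(I_\G)$ inside $\mathcal{C}$ — which does lie in $\mathcal{C}$ because $\alpha(I_\G)$ is abelian.

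It then remains to compute $\mathcal{C}/\alpha(I_\G)$ against the block structure of Proposition~\ref{cent}. From relations (1)--(6) I would list the generators of $\alpha(I_\G)$ block by block: $\alpha(\iota_c)=-I$; the involutions $\alpha(\iota_i)$ for $1\le i\le l$, which generate $\mathcal{L}$ in the first block; and the involutions $\alpha(\iota_i)$ for $l+1\le i\le m$, which generate the full diagonal $(\Z/2)^{m-l}$ in the second block. I would then quotient in stages. The second-block factor $(\Z/2)^{m-l}$ lies wholly inside $\alpha(I_\G)$ and is killed. Dividing the first block $C(\mathcal{L})=\Lambda_l[2]\times(\Z/2)^l$ by $\mathcal{L}$ produces the factor $(\Lambda_l[2]\times(\Z/2)^l)/\mathcal{L}$. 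Finally, $-I$ contributes a central $-I$ in the $\mathrm{GL}_{k-1}(\Z)$ block, and modding it out converts that block into $\mathrm{PGL}_{k-1}(\Z)$; assembling the three surviving factors yields the asserted product.

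The hard part will be handling $-I$, which is \emph{diagonal} across the three blocks: $\alpha(I_\G)$ is not the internal direct product of its intersections with the separate blocks, since $\alpha(\iota_c)=-I$ contributes $-I$ simultaneously to all of them. Thus, while the block-$2$ factor and $\mathcal{L}$ are divided out cleanly, the residual relation coming from $-I$ couples the first and third blocks. The heart of the argument is to check that, once $\mathcal{L}$ and the block-$2$ factor are removed, the only remaining effect of $-I$ is to identify the central $\pm I$ of the $\mathrm{GL}_{k-1}(\Z)$ block — so that this coupling contributes nothing beyond passing to $\mathrm{PGL}_{k-1}(\Z)$ and does not further entangle the $(\Lambda_l[2]\times(\Z/2)^l)/\mathcal{L}$ factor with it. Verifying that the induced extension genuinely splits as the stated direct product is the delicate bookkeeping on which the proposition turns.
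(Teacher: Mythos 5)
Your strategy is the same as the paper's: write $\bar{\mathcal{C}}\cong\mathcal{C}/(\mathcal{C}\cap\Inn(\Outraag))$, identify the intersection with $\alpha(I_\G)$, and then divide the group of Proposition~\ref{cent} by $\alpha(I_\G)$ block by block. Your first step is correct, and is in fact carried out more carefully than in the paper: the semidirect-product computation $(u,\rho)(w,\sigma)(u,\rho)^{-1}=(\alpha(\rho)w+u-\alpha(\sigma)u,\sigma)$, together with the remark that $\alpha(I_\G)\subseteq\mathcal{C}$ because $\alpha(I_\G)$ is abelian, yields exactly the paper's identity $\mathcal{C}\cap\Inn(\Outraag)\cong\alpha(I_\G)$. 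Your block-by-block list of the generators of $\alpha(I_\G)$ (namely $\mathcal{L}$ in the first block, the full diagonal $(\Z/2)^{m-l}$ in the second, and the global $-I=\alpha(\iota_c)$) is also correct.

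The gap is that you never perform the verification you yourself single out as the heart of the matter. After killing $\mathcal{L}$ and the second block, what remains is $\bigl((C(\mathcal{L})/\mathcal{L})\times\mathrm{GL}_{k-1}(\Z)\bigr)/\langle([-I_{2l}],-I_{k-1})\rangle$, and the class $[-I_{2l}]$ is nontrivial in $C(\mathcal{L})/\mathcal{L}$, since no element of $\mathcal{L}$ has a diagonal $2\times 2$ block equal to $-I_2$. So one is quotienting a direct product by a genuinely diagonal central involution, and such quotients do not split off the second factor in general; asserting that "the only remaining effect of $-I$ is to pass to $\mathrm{PGL}_{k-1}(\Z)$" is precisely what needs proof. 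A proof is available exactly when one can untwist the coupling by a character: if $k-1$ is odd then $\det(-I_{k-1})=-1$, and the homomorphism sending $y\in\mathrm{GL}_{k-1}(\Z)$ to $[-I_{2l}]^{(1-\det y)/2}$ in the centre of $C(\mathcal{L})/\mathcal{L}$ defines an automorphism $(x,y)\mapsto(x\cdot[-I_{2l}]^{(1-\det y)/2},y)$ of the product carrying $([-I_{2l}],-I_{k-1})$ to $(1,-I_{k-1})$, which gives the stated splitting. If instead $k-1$ is even (and at least $4$), every homomorphism from $\mathrm{GL}_{k-1}(\Z)$ to an abelian group factors through $\det$ and hence kills $-I_{k-1}$, so no untwisting exists; worse, comparing abelianizations obstructs \emph{any} isomorphism with the stated product, because $-I_{2l}\notin\mathcal{L}\cdot[C(\mathcal{L}),C(\mathcal{L})]$ (commutators of level-$2$ congruence matrices lie in level $4$), so the coupling element survives in the abelianization of the first factor while $-I_{k-1}$ dies in that of the second. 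In other words, what you deferred as "delicate bookkeeping" is the actual mathematical content of the proposition and carries a parity dependence. You have located exactly the step that the paper's own proof also elides (it concludes in one line after noting that the $(\Z/2)^{m-l}$ factor vanishes), but identifying the crux is not the same as resolving it, so as written your proposal does not prove the statement.
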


\begin{proof}Consider $w \in \Z^{k+m-1} \leq \Outraag$. Direct computation gives that for any \linebreak $\beta := uh \in \Outraag$ (where $u \in \Z^{k+m-1}$ and $h \in I_\G$), we have $\beta w \beta^{-1} = \alpha(h)(w)$. Since any member of $\mathcal{C}$ preserves $\Z^{k+m-1}$ inside $\Outraag$, we see that $\phi \in \mathcal{C}$ is inner in $\Aut(\Outraag)$ if and only if there exists $h \in I_\G$ such that $\phi (\bar x_i) = \alpha(h)(\bar x_i)$ for all $x_i \in V$. Precisely, we have that \[ \mathcal{C} \cap \Inn(\Outraag) \cong \alpha(I_\G) .\] Immediately we see that $\mathcal{C}$ has infinite image in $\Out(\Outraag)$, but we can determine its structure exactly.

Recall that \[ \mathcal{C} \cong (\Lambda_l[2] \times (\Z / 2)^l) \times (\Z / 2)^{m-l} \times \mathrm{GL}_{k-1}(\Z),\]  by Proposition \ref{cent}. Since $\alpha (I_\G) \cong I_\G \cong (\Z / 2)^n$, the $(\Z / 2)^{m-l}$ factor vanishes in $\Out(\Outraag)$, and we have 
\[ \bar {\mathcal{C}} \cong \left((\Lambda_l[2] \times (\Z / 2)^l ) / \mathcal{L}\right) \times \mathrm{PGL}_{k-1}(\Z) . \] \end{proof}

We have thus established Theorem~\ref{inf}. Over the course of our proofs of Propositions \ref{cent} and \ref{cbar}, we also obtained the following corollary.

\begin{corollary}\label{faith}Given any $\Z$-linear (resp. projective $\Z$-linear) group $G$, there exist infinitely many right-angled Artin groups $\raag$ for which $G$ acts faithfully on $\Outraag$ via automorphisms (resp. outer automorphisms).
\end{corollary}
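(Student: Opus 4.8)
The plan is to realize $G$ directly inside the $\mathrm{GL}_{k-1}(\Z)$ (resp.\ $\mathrm{PGL}_{k-1}(\Z)$) factor produced by Propositions~\ref{cent} and~\ref{cbar}, for a suitably chosen focused graph, and then to observe that the number $k$ of connected components of $\Gamma \setminus \st(c)$ can be prescribed in infinitely many genuinely different ways. Throughout I fix a presentation of $G$ as a subgroup of $\mathrm{GL}_N(\Z)$ (resp.\ $\mathrm{PGL}_N(\Z)$) and aim to produce focused graphs whose associated $\mathrm{GL}$ (resp.\ $\mathrm{PGL}$) factor is exactly $\mathrm{GL}_N(\Z)$ (resp.\ $\mathrm{PGL}_N(\Z)$).

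Suppose first that $G \leq \mathrm{GL}_N(\Z)$. I would build a focused graph $\Gamma$ at $c$ for which $\Gamma \setminus \st(c)$ has exactly $k = N+1$ components, none of which is a single vertex dominated by $c$; thus $l = 0$, so all $k-1$ surviving generators $\bar\chi_j$ lie in the third diagonal block and Proposition~\ref{cent} produces a direct factor $\mathrm{GL}_{k-1}(\Z) = \mathrm{GL}_N(\Z)$ of $\mathcal{C}$. As explained before Proposition~\ref{cent}, $\mathcal{C}$ sits inside $\Aut(\Outraag)$ by extending each of its elements to act trivially on $I_\G$, and this realization is faithful: an element $M \in \mathcal{C} \leq \mathrm{GL}_{k+m-1}(\Z)$ already acts faithfully on $\PCTo \cong \Z^{k+m-1} \leq \Outraag$, and its extension is determined by this action together with the trivial action on $I_\G$. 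Restricting to the factor $\mathrm{GL}_N(\Z) \supseteq G$ then yields a faithful action of $G$ on $\Outraag$ by automorphisms. For the projective case, $G \leq \mathrm{PGL}_N(\Z)$, I would use the same graphs: by Proposition~\ref{cbar} the image $\bar{\mathcal{C}}$ of $\mathcal{C}$ in $\Out(\Outraag)$ contains $\mathrm{PGL}_{k-1}(\Z) = \mathrm{PGL}_N(\Z)$ as a direct factor, and restricting to $G$ gives the desired faithful action by outer automorphisms.

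It then remains to exhibit infinitely many pairwise non-isomorphic focused graphs realizing this fixed value of $k$. I would fix the $k$ components and vary the internal combinatorics of one of them — for instance replacing a chosen component $P_1$ by cycles of arbitrarily large length, each attached to $\lk(c)$ at a single vertex — so that the number of components stays equal to $k$ while the total vertex count grows without bound. Distinct sizes yield RAAGs with different numbers of generators, hence non-isomorphic $\raag$, and so infinitely many examples.

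The genuine obstacle is the verification that each graph in such a family remains focused, since Propositions~\ref{cent} and~\ref{cbar} apply verbatim only under the two defining conditions. Concretely, one must check that enlarging a component creates no new dominated pair $v \leq u$ with $u \neq c$ and no new vertex $v \neq c$ whose star $\st(v)$ disconnects $\Gamma$. This requires care: for example, na\"{i}vely attaching pendant vertices to $c$ fails, because such a vertex $x$ has $\lk(x) = \{c\}$ and is therefore dominated by \emph{every} neighbor of $c$, violating condition~(i). Using long cycles for the varying component avoids this — a cycle of length $\geq 5$ has no internal dominations and no vertex whose star separates it — but the precise attachment to $\st(c)$ must still be arranged so that only $\st(c)$ disconnects $\Gamma$. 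The algebraic heart of the statement, namely the faithful appearance of $\mathrm{GL}_{k-1}(\Z)$ and $\mathrm{PGL}_{k-1}(\Z)$, is already supplied by the two propositions, so this combinatorial bookkeeping is the only step that remains to be carried out in full.
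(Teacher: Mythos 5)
Your algebraic core is exactly the paper's own proof: Corollary \ref{faith} is deduced there directly from Propositions \ref{cent} and \ref{cbar}, by embedding $G\leq \mathrm{GL}_N(\Z)$ into the $\mathrm{GL}_{k-1}(\Z)$ factor of $\mathcal{C}\leq\Aut(\Outraag)$, resp.\ $G\leq\mathrm{PGL}_N(\Z)$ into the $\mathrm{PGL}_{k-1}(\Z)$ factor of $\bar{\mathcal{C}}\leq\Out(\Outraag)$. Your decision to fix $k=N+1$ and vary the graph (rather than let $k$ grow, which would be delicate in the projective case, since $\mathrm{PGL}_N(\Z)$ need not embed in $\mathrm{PGL}_{k-1}(\Z)$ for $k-1>N$) is sound, and your faithfulness argument via the action on $\PCTo\cong\Z^{k+m-1}$ is fine.

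The gap is in the graph family, and it is a real one: a long cycle attached to the rest of the graph at a single point is \emph{never} focused, and the obstruction is not the cycle's internal structure (which you analyze correctly) but the single-point attachment itself. Suppose the cycle $p,q_1,\dots,q_{r-1}$ meets the rest of $\Gamma$ only at $p$ (either $p\in\lk(c)$, or $p$ joined by an edge to some $y\in\lk(c)$). Then $\st(q_1)=\{p,q_1,q_2\}$, and in $\Gamma\setminus\st(q_1)$ the path $q_3,\dots,q_{r-1}$ is separated from the component containing $c$, because every edge from the cycle to the rest of $\Gamma$ is incident to $p$; so condition (ii) fails at $q_1\neq c$. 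Moreover, such a graph admits the reflection of the cycle fixing $p$ as a nontrivial automorphism, violating the standing assumption $\Aut(\Gamma)=1$ made after Figure \ref{f1}, under which Propositions \ref{cent} and \ref{cbar} are proved. The same failure occurs for \emph{any} component joined to $\st(c)$ through a single vertex, so the repair is forced: each non-singleton component must be attached to at least two vertices of $\lk(c)$, asymmetrically, and one must then verify that the attachment creates no domination $v\leq u$ with $u\neq c$ (your pendant-vertex example shows how easily these arise) and no new separating stars. This is precisely the ``bookkeeping'' you defer, but as your own candidate family shows, it is where the construction can actually break, so it must be carried out --- for instance by enlarging the non-singleton components of the graph in Figure \ref{f1} in an asymmetric, multiply-attached way. (In fairness, the paper never exhibits such an infinite family explicitly either; but your proof rests the ``infinitely many'' clause on this step, so it cannot stand as written.)
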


\textbf{Commensurators.}
If one considers abstract commensurators $\text{Comm}(\Outraag)$ instead of $\Out(\Outraag)$, the discrepancy between our examples and free or free abelian groups is not quite so severe.  We can make this observation precise as follows.  


Let $\Gamma=(V,E)$ be a focused graph and suppose that $k+m\geq 4$, where $k,m$ are as in the statement of Proposition \ref{pct}.  In particular, we have $n=|V|\geq k+m\geq4$.  At one end, it is a result of Farb and Handel \cite{FH07} that for $n\geq 4$, the abstract commensurator $\text{Comm}(\Out(F_n))$ is just equal to $\Out(F_n)$.  For our focused graph $\Gamma$, by Proposition \ref{pct}, $\Outraag$ is virtually free abelian of rank $k+m-1$, hence its abstract commensurator is $\GL_{k+m-1}(\Q)$.  On the other hand, a theorem of Margulis (see \cite{Mar91}, \cite{Z84}) implies that for $k\geq 3$, the abstract commensurator of $\GL_n(\Z)$ is commensurable with $\GL_n(\Q)$.  

\section{Proof of Theorem B \label{austere}} 
First we recall the definition of an austere graph as defined in \cite{F14}.  A finite simplicial graph $\G=(V,E)$ is called \emph{austere} if it has trivial symmetry group, no dominated vertices, and $\G\setminus \st(v)$ is connected for any $v\in V$.  In particular, we have that no vertex is adjacent to every other vertex, and hence the associated RAAG $A_\G$ has trivial center.

Let $\G$ be an austere graph.  By inspecting the Laurence--Servatius generators, this implies that $\Outraag$ consists only of inversions.  In this case, we know that the automorphism group is a semidirect product:\[\Autraag\cong \Inn(\raag)\rtimes \Out(\raag)\cong \raag\rtimes I_\G,\] where $I_\G$ is the group of inversions. It is easy to write down a presentation for $\Autraag$ in terms of the usual presentation for $\raag$. If $V=\{v_1,\ldots,v_n\}$ is the vertex set of $\G$ then 
\[\Autraag=\left\langle\begin{array}{l|c} \gamma_i,\iota_j \mbox{ for $1\leq i,j\leq n$} & \widetilde{\mathcal{R}}_\G \end{array}\right\rangle.\]
Here $\gamma_i$ represents conjugation by $v_i$, $\iota_j$ is inversion of $v_j$, and $\widetilde{\mathcal{R}}_\G$ is comprised of the following five types of relation:
\begin{align} [\gamma_i ,\gamma_k ], & & \mbox{if $v_i$ commutes with $v_k$ in $A_\G$}, \\
[\iota_j,\iota_l], & & 1\leq j,l\leq n, \\
(\iota_j)^2, && 1\leq j\leq n,\\
[\gamma_i,\iota_j], && 1\leq i\neq j\leq n,\\
(\gamma_i\iota_i)^2, && 1\leq i \leq n.
\end{align}

 Using this presentation, we are now ready to prove Theorem \ref{aust}.


\emph{Proof of Theorem \ref{aust}.} Let $\pi_k:I_\G=(\Z/2)^n\rightarrow \Z/2$ be the projection onto the $k^{th}$ factor. Consider a function $\phi:\{1,\ldots,n\}\rightarrow I_\G$ satisfying the following two properties:
\begin{enumerate}[(i)]
\item $\pi_k(\phi(k))=0$ for $1\leq k\leq n$.
\item $\pi_j(\phi(k))=0$ whenever $v_k$, $v_j$ commute in $\raag$.   
\end{enumerate}
Then $\phi$ induces a map $\Phi:\Autraag\rightarrow\Autraag$ defined by \[\Phi(\gamma_k)=\gamma_k\cdot\phi(k),\mbox{ }\Phi(\iota_k)=\iota_k.\] Observe that $\Phi$ is an involution, hence a bijection. To see that $\Phi$ is moreover an automorphism, we simply check that is preserves relations.  Since $\Phi$ is the identity on $I_\G$, it is clear that the relations of the form (8) and (9) hold.  Moreover, as all inversions commute with each other, it is clear that the relations of the form (10) are preserved by $\Phi$.  Finally, condition (i) ensures that the relations of type (11) are satisfied, and condition (ii) ensures that the relations of the form (7) still hold.  

The automorphism $\Phi$ is not inner since it does not preserve the subgroup $\Inn(\raag)$, and by composing $\Phi$ with the projection onto $I_\G$, we see that each distinct $\phi$ constructed above gives a distinct $\Phi \in \Out(\Aut(A_\G))$.  Finally, the hypothesis that $\G$ is austere and not a single vertex implies that $\G$ has diameter at least 2.  Thus, given any two non-adjacent vertices it is possible to define a non-zero function $\phi$ as above, hence every vertex contributes at least one such $\Phi$. \qed

\textbf{Remark.} One expects that as $n$ gets large the number of maps $\phi$ which satisfy (i) and (ii) will grow more quickly than exponential in $n$, and hence the automorphism group will be very large.  Indeed, if the maximal valence of any vertex of $\G$ is $k$, then there are at least $2^{n(n-k-1)}$ such $\phi$.    

\bibliographystyle{plain}
\bibliography{outoutbib}
\end{document}